\newcommand{\subjclass}[2][2010]{%
  \let\@oldtitle\@title%
  \gdef\@title{\@oldtitle\footnotetext{#1 \emph\small{Mathematics subject classification.} #2}}%
}
\newtheorem{Th}{Theorem}
\newtheorem*{Th*}{Theorem}
\newtheorem{Wn}{Corollary}
\newtheorem{Le}{Lemma}
\theoremstyle{definition}
\newtheorem{Uw}{Remark}
\newcommand\mR{{\mathbb R}}
\newcommand\mN{{\mathbb N}}
\newcommand\mQ{{\mathbb Q}}
\newcommand\restr{\mathord{\restriction}}
\newcommand\B{{\mathscr B}}
\newcommand\PC{{\mathscr{P}}}
\newcommand\cn{\colon}
\newcommand\ol{\overline}
\newcommand\fs{F_\sigma}
\newcommand\gd{G_\delta}
\newcommand\fcb{\mathfrak{fcBor}}
\newcommand\flb{\mathfrak{flB}}
\newcommand\fii{{\varphi}}
\begin{document}

\begin{frontmatter}
\title{Linear extensions of Baire-one and Borel functions}

\author{Waldemar Sieg}
\ead{waldeks@ukw.edu.pl}

\address{Kazimierz Wielki University, Institute of Mathematics, Powsta\'nc\'ow Wielkopolskich 2, \\85-090 Bydgoszcz, Poland}

\begin{abstract}
Let $X$ and $Y$ be the Hausdorff topological spaces and let $A$ be both an $\fs$- and $\gd$- subset of $X$. Let also $f\cn A\to Y$ be a function
for which the inverse image of every open subset $U\subset Y$ is $\fs$ in $X$. We show that $f$ can be linearly extended to a function
with the same property defined on $X$. A similar result is proved for Baire-one function defined on an analogous subset
of $\mR$. We give also an answer when the extension map is (with a supremum norm) an isometry.
\end{abstract}

\begin{keyword}
Borel function, Baire one function, Isometry, Linear extension, Retraction, Hausdorff space, Perfectly normal space.
\end{keyword}

\subjclass{Primary: 26A15; 54C20; Secondary: 26A21; 54C30.}

\end{frontmatter}

\section{Introduction}
Let $X$, $Y$, $A$ and $f$ have the same meaning as in the abstract.
The mapping $r\cn X\to A$ is an {\em algebraic retraction} if $r(x)=x$ for every $x\in A$.
Recall that $\B_1(X,Y)$ is the space of pointwise limits of sequences
of continuous functions $X\to Y$. We say that a function $f\cn X\to Y$ is
\begin{itemize}
 \item {\em piecewise continuous}, if there is an increasing sequence $\left(X_n\right)\subset X$ of nonempty closed sets such
that $X=\bigcup^\infty_{n=0}X_n$ and the restriction $f_{\restr X_n}$ is continuous for each $n\in\mN$;
\item {\em of the first Borel class}, if the $f$-inverse image of every open subset of $Y$ is $\fs$ in $X$ (see \cite[p. 1]{JayRog});
\item {\em of the first level Borel class}, if the $f$-inverse image of every $\fs$ subset of $Y$ is $\fs$ in $X$ (see \cite[p. 1]{JayRog}).
\end{itemize}
Note that in the last definition, the words ''$\fs$ subset of $Y$'' may be equivalently replaced by ''closed subset of $Y$''.
The symbols $\PC(X,Y)$, $\fcb(X,Y)$ and $\flb(X,Y)$ stand for the families of piecewise continuous, first Borel class and first level Borel class of functions
$X\to Y$, respectively. The inclusion $\flb(X,Y)\subset\fcb(X,Y)$
holds whenever any open set in $Y$ is $\fs$ (e.g. if $Y$ is perfectly normal). A function witnessing the fact that
$\flb(\mR,\mR)\neq\fcb(\mR,\mR)$ is the classical Riemann function (i.e. $f(x)=0$ for $x$ irrational and $f(x)=\frac{1}{q}$ if $x=\frac{p}{q}$ where
$p$, $q$ are mutually prime), which is of the first Borel class but not of the first level Borel class.
If $f\in\fcb^b(A,Y)$ and $f\in\B_1^b(A,Y)$ are bounded elements of $\fcb(A,Y)$ and $\B_1(A,Y)$, respectively, the \emph{supremum norm}
$\|\cdot\|_A$ of $f$ on $A$ is defined as $\|f\|_A=\sup_{x\in A}|f(x)|$.

This paper deals with the general problem in the theory of real functions, which is inspired by the Tietze extension theorem:

\begin{center}\emph{$(P)$ Let $A$ be a nonempty subset of a topological space $X$ and let $f_0\in\mR^A$ be a function with a certain
property $(W)$. Can $f_0$ be extended to a function $f\in\mR^X$ with the same property $(W)$?}\end{center}

Classical Tietze's theorem says that every continuous function defined on a closed subset of a normal topological space $X$ can be extended
to a function that is continuous on whole $X$. Moreover,  in 1982 Jayne and Rogers \cite{JayRog} proved the following
\begin{Th}\label{Jay_Rog}
Let $X$ and $Y$ be metric, $Y$ complete. Let $A\subset X$ and $f\in\PC(A,Y)$. There exists a set $A_1\subset X$ which is 
an intersection of $\gd$- and $\fs$- subsets of $X$ and a function $\ol{f}\in\PC(A_1,Y)$ extending $f$.
\end{Th}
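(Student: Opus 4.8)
The plan is to extend $f$ separately on each continuous piece, to glue the pieces into one map $\ol f$, and then to check that $\ol f$ is defined on a set of the required descriptive type and is again piecewise continuous. Fix a witnessing decomposition $A=\bigcup_{n\ge 1}A_{n}$ with $A_{n}$ closed in $A$, $A_{n}\subset A_{n+1}$, and $f\restr A_{n}$ continuous, and set $C_{n}:=\cl_{X}(A_{n})$ (with $C_{0}=\emptyset$); then $C_{1}\subset C_{2}\subset\cdots$ are closed in $X$ and $A_{n}=A\cap C_{n}$, since $A_{n}$ is closed in $A$. As $Y$ is a complete metric space, the classical Kuratowski extension theorem applies to each continuous $f\restr A_{n}$: it extends to a continuous $g_{n}\cn G_{n}\to Y$, where $G_{n}:=\{x\in C_{n}:\operatorname{osc}(f\restr A_{n})(x)=0\}$ is a $\gd$ subset of $X$ with $A_{n}\subset G_{n}\subset C_{n}$, and $G_{n}$ is the \emph{largest} subset of $C_{n}$ carrying a continuous extension of $f\restr A_{n}$.

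Next I would establish coherence of these pieces. For $m<n$, the restriction $g_{n}\restr(G_{n}\cap C_{m})$ is a continuous extension of $f\restr A_{m}$ into $C_{m}$, so maximality of $G_{m}$ forces $G_{n}\cap C_{m}\subset G_{m}$; and because $A_{m}$ is dense in $G_{n}\cap C_{m}$ while $g_{m}$ and $g_{n}$ coincide on $A_{m}$, they coincide on all of $G_{n}\cap C_{m}$. Hence $\ol f$, defined on $A_{1}:=\bigcup_{n}G_{n}$ by $\ol f(x):=g_{n}(x)$ for any $n$ with $x\in G_{n}$, is well defined, and it extends $f$: if $x\in A_{n}\setminus A_{n-1}$ then $x\notin C_{n-1}$ (because $A\cap C_{n-1}=A_{n-1}$), so $x\in G_{n}$ and $\ol f(x)=f(x)$. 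One also notes $A_{1}\cap C_{n}=\bigcup_{m\le n}G_{m}$ for every $n$.

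The descriptive complexity of $A_{1}$ then comes out cleanly. Put $F:=\bigcup_{n}C_{n}$, an $\fs$ subset of $X$. Since $G_{n}$ is $\gd$ and $C_{n}$ is closed, $C_{n}\setminus G_{n}$ is $\fs$ in $X$; write $C_{n}\setminus G_{n}=\bigcup_{k}Z_{n}^{k}$ with $Z_{n}^{k}$ closed in $X$ and increasing in $k$. For $x\in C_{n}\setminus C_{n-1}$ one has $x\in A_{1}$ iff $x\in G_{n}$, so
\[
F\setminus A_{1}=\bigsqcup_{n}\bigl((C_{n}\setminus G_{n})\cap(X\setminus C_{n-1})\bigr)=\bigsqcup_{n}\bigcup_{k}\bigl(Z_{n}^{k}\setminus C_{n-1}\bigr),
\]
and each $Z_{n}^{k}\setminus C_{n-1}$ is locally closed, hence $\fs$ in $F$; thus $F\setminus A_{1}$ is $\fs$ in $F$, i.e.\ $A_{1}$ is $\gd$ in $F$. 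Writing $A_{1}=F\cap H$ with $H$ a $\gd$ subset of $X$ realises $A_{1}$ as the intersection of an $\fs$- and a $\gd$- subset of $X$.

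The hard part, and the one I expect to be the main obstacle, is showing $\ol f\in\PC(A_{1},Y)$. The obvious closed exhaustion $A_{1}\cap C_{n}=\bigcup_{m\le n}G_{m}$ fails, because each $G_{m}$ is only $\gd$ (not closed) inside this set: a sequence in a ``higher'' piece $G_{m}$ may converge to a point of a ``lower'' piece $G_{l}$ while $\ol f$ jumps, the jump occurring across the oscillation set $C_{m}\setminus G_{m}$. The cure is to peel off the $\gd$-layers. Using the closed sets $Z_{n}^{k}$ together with $\gd$-presentations of the sets $G_{n}\setminus C_{n-1}$, one builds closed subsets $B_{j}$ of $A_{1}$ in which every point lying in some $G_{m}$ is kept at distance at least $1/j$ from the oscillation layers $Z_{n}^{k}$ ($n\le j$) that could interfere, then enlarges the $B_{j}$ to make the sequence increasing and to exhaust $A_{1}$; the distance condition guarantees that near any $x\in B_{j}$ the values of $\ol f$ are controlled by a single $g_{m}$, so $\ol f\restr B_{j}$ is continuous. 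The delicate point is to excise exactly the right neighbourhoods, so that simultaneously $B_{j}$ is closed in $A_{1}$, $B_{j}\subset B_{j+1}$, and $\bigcup_{j}B_{j}=A_{1}$ --- the last failing for crude choices, since a point of $G_{n(x)}$ may itself be a limit of its own oscillation set, so one must discard neighbourhoods only of the ``wrong-index'' part. Coordinating the layers and radii across all indices so that these three requirements hold together with continuity is the technical core; once it is in place, $\ol f\in\PC(A_{1},Y)$, and with the previous paragraph the theorem follows.
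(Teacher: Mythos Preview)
The paper does not supply a proof of this statement. Theorem~\ref{Jay_Rog} is quoted verbatim as a result of Jayne and Rogers \cite{JayRog} and is used only as background; there is no argument in the paper to compare your proposal against.

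As for your sketch itself: the overall strategy---extend each continuous piece $f\restr A_n$ via the Kuratowski/Lavrentiev theorem to a $\gd$ set $G_n\subset\cl_X(A_n)$, glue using density, and then analyse $A_1=\bigcup_n G_n$---is indeed the route taken in \cite{JayRog}. Your verification that $A_1$ is the intersection of an $\fs$ and a $\gd$ set is correct. However, you explicitly leave the piecewise continuity of $\ol f$ as a heuristic (``peel off the $\gd$-layers'', ``excise exactly the right neighbourhoods''), and that is precisely the substantive part of the Jayne--Rogers argument; what you have written there is a description of the difficulty rather than a proof. If your aim is a self-contained argument, that step needs to be carried out in full; if your aim is only to cite the result, then---like the present paper---no proof is required.
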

Finally, in 2008 Karlova \cite[Theorem 3.2]{Karlova} showed that if $X$ is perfectly paracompact and $A$ is its complete metrizable subspace 
then every continuous function $f\cn A\to Y$ has an extension $\ol{f}\in\fcb(X,Y)$.
In this paper we prove an analogous results for Baire-one mappings and the first Borel class of functions. 
In (key) Lemma $\ref{funkcja fi}$ we construct an algebraic retraction $\fii$. Its properties allow us to formulate and prove these results.

\section{Useful lemmas}
The first lemma is a well-known fact (see \cite[Theorem 2, Section VI of $\S$31]{Kur}).

\begin{Le}\label{suma_fcb}
Let $X$ be the Hausdorff topological space. If $f,g\in\fcb(X,\mR)$ then $f+g\in\fcb(X,\mR)$.
\end{Le}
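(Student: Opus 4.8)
The plan is to verify directly that $(f+g)^{-1}(U)$ is $\fs$ in $X$ for every open $U\subset\mR$, reducing first to a subbasis of $\mR$ and then invoking the stability of the class of $\fs$ subsets of $X$ under finite intersections and countable unions. Since the family of sets $B\subset\mR$ for which $(f+g)^{-1}(B)$ is $\fs$ is closed under finite intersections and countable unions, and since every open $U\subset\mR$ is a countable union of intervals of the form $(a,b)=(a,\infty)\cap(-\infty,b)$ with rational endpoints, it suffices to show that $(f+g)^{-1}\big((a,\infty)\big)$ and $(f+g)^{-1}\big((-\infty,b)\big)$ are $\fs$ in $X$ for all $a,b\in\mR$.

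For the ray $(a,\infty)$ I would use the rational interpolation identity
\[
  (f+g)^{-1}\big((a,\infty)\big)\;=\;\bigcup_{q\in\mQ}\Big(f^{-1}\big((q,\infty)\big)\cap g^{-1}\big((a-q,\infty)\big)\Big),
\]
which holds because $f(x)+g(x)>a$ precisely when $f(x)>q>a-g(x)$ for some rational $q$. By hypothesis $f,g\in\fcb(X,\mR)$, so each set $f^{-1}\big((q,\infty)\big)$ and $g^{-1}\big((a-q,\infty)\big)$ is $\fs$ in $X$; writing each as a countable union of closed sets and distributing the intersection over these unions shows that $f^{-1}\big((q,\infty)\big)\cap g^{-1}\big((a-q,\infty)\big)$ is $\fs$, and a countable union over $q\in\mQ$ of $\fs$ sets is again $\fs$. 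Hence $(f+g)^{-1}\big((a,\infty)\big)$ is $\fs$, and the analogous identity with the inequalities reversed gives the same conclusion for $(f+g)^{-1}\big((-\infty,b)\big)$. Reassembling over a countable base of $\mR$ then yields $f+g\in\fcb(X,\mR)$.

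I do not anticipate any genuine obstacle: the argument is elementary and relies only on the rational interpolation trick together with the routine facts that the intersection of two $\fs$ sets is $\fs$ and that preimages of the rays $(a,\infty)$ and $(-\infty,b)$ control preimages of all open subsets of $\mR$. If anything needs care it is merely the bookkeeping that turns ``$\fs$ intersect $\fs$ is $\fs$'' into a clean statement, which follows from the distributive law for intersection over countable unions of closed sets.
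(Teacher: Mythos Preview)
Your argument is correct: the rational interpolation identity for $(f+g)^{-1}\big((a,\infty)\big)$ holds by density of $\mQ$ in $\mR$, the intersection of two $\fs$ sets is $\fs$ by distributing over the defining countable unions of closed sets, and the reduction to half-lines via a countable base of open intervals is valid because preimages commute with unions and intersections.

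The paper, however, does not prove this lemma at all; it simply records it as a well-known fact with a reference to Kuratowski's \emph{Topology} (\S31, Section~VI, Theorem~2). So your proposal supplies strictly more than the paper does: a self-contained elementary verification in place of a citation. The trade-off is the usual one --- the paper keeps the exposition lean by outsourcing a standard closure property, while your write-up makes the note independent of that source at the cost of a paragraph of routine set algebra.
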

The next lemma follows from the fact that $|f|$ is a composition of $f$ and an absolute value, which is continuous.

\begin{Le}\label{modul_fcb}
Let $X$ be the Hausdorff topological space. If $f\in\fcb(X,\mR)$ then $|f|\in\fcb(X,\mR)$.
\end{Le}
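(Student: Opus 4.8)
The plan is to follow the hint and realise $|f|$ as a composition with a continuous map. Define $h\cn\mR\to\mR$ by $h(t)=|t|$; this is continuous and $|f|=h\circ f$. To verify that $|f|\in\fcb(X,\mR)$ we only have to check that the $|f|$-preimage of each open set is $\fs$ in $X$.

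So fix an arbitrary open $U\subseteq\mR$ and compute $(|f|)^{-1}(U)=f^{-1}\big(h^{-1}(U)\big)$. Since $h$ is continuous, $h^{-1}(U)$ is open in $\mR$; since $f\in\fcb(X,\mR)$, the $f$-preimage of any open subset of $\mR$ is $\fs$ in $X$. Hence $(|f|)^{-1}(U)$ is $\fs$ in $X$, and as $U$ was arbitrary, $|f|\in\fcb(X,\mR)$, which is the assertion.

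There is essentially no obstacle here — the step ``preimage of an open set under a continuous map is open'' does all the work, and in fact the same argument shows more generally that $g\circ f\in\fcb(X,\mR)$ for every continuous $g\cn\mR\to\mR$; the Hausdorff hypothesis on $X$ is not used and is retained only for uniformity with the rest of the section. If one prefers to avoid composition language, the proof can be unwound by hand: write the open set $U\subseteq\mR$ as a countable union of bounded open intervals, observe that for such an interval $(a,b)$ the set $\{x\in X:|f(x)|\in(a,b)\}$ equals $f^{-1}\big((-b,b)\big)$ when $a<0$ and $f^{-1}\big((a,b)\cup(-b,-a)\big)$ when $a\wi 0$ — in either case the $f$-preimage of an open subset of $\mR$, hence $\fs$ in $X$ — and finally use that a countable union of $\fs$ sets is $\fs$.
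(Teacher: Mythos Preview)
Your proof is correct and follows exactly the paper's approach: the paper merely remarks that $|f|$ is the composition of $f$ with the continuous absolute-value map, and you have spelled this out in full (together with an optional unwound version). Nothing needs to be changed.
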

From Lemmas \ref{suma_fcb} and \ref{modul_fcb} we get the following

\begin{Wn}\label{krata_liniowa}
 If $X$ is the Hausdorff topological space then $\fcb(X,\mR)$ is a linear lattice.
\end{Wn}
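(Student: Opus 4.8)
The plan is to observe that $\mR^X$, equipped with pointwise addition, pointwise scalar multiplication and the pointwise order, is itself a linear lattice, so it suffices to check that $\fcb(X,\mR)$ is simultaneously a linear subspace of $\mR^X$ and a sublattice of it.

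For the linear subspace part, closure under addition is precisely Lemma \ref{suma_fcb}. For closure under multiplication by a scalar $c\in\mR$, I would note that $cf$ is the composition of $f\in\fcb(X,\mR)$ with the continuous map $t\mapsto ct$ on $\mR$; since the preimage of an open set under this map is open, and the $f$-preimage of an open set is $\fs$ in $X$, the $cf$-preimage of every open set is $\fs$, i.e. $cf\in\fcb(X,\mR)$. As the zero function is constant, hence continuous and a fortiori in $\fcb(X,\mR)$, this shows $\fcb(X,\mR)$ is a linear subspace of $\mR^X$.

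For the sublattice part, I would use the standard identities
$$f\vee g=\tfrac12\bigl(f+g+|f-g|\bigr),\qquad f\wedge g=\tfrac12\bigl(f+g-|f-g|\bigr).$$
Given $f,g\in\fcb(X,\mR)$, the scalar-multiplication step gives $-g\in\fcb(X,\mR)$, Lemma \ref{suma_fcb} then gives $f-g\in\fcb(X,\mR)$, Lemma \ref{modul_fcb} gives $|f-g|\in\fcb(X,\mR)$, and two further applications of Lemma \ref{suma_fcb} together with multiplication by $\tfrac12$ put $f\vee g$ and $f\wedge g$ in $\fcb(X,\mR)$. Hence $\fcb(X,\mR)$ is closed under finite joins and meets in the pointwise order, which is exactly the assertion.

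I do not expect any genuine difficulty here: the content is entirely carried by Lemmas \ref{suma_fcb} and \ref{modul_fcb}, and the only ingredient not literally stated there—closure under scalar multiplication—is settled by the elementary composition argument above. The one small point to keep in mind is that membership in $\fcb$ is defined through preimages of open sets, so every step should be phrased in those terms (or, equivalently, via composition with continuous real functions) rather than assuming any a priori algebraic or order structure on $\fcb(X,\mR)$.
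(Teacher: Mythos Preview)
Your argument is correct and is exactly the intended one: the paper simply records the corollary as an immediate consequence of Lemmas~\ref{suma_fcb} and~\ref{modul_fcb}, and what you have written is precisely the standard unpacking of that remark (linear subspace via addition and scalar multiplication, sublattice via the identities $f\vee g=\tfrac12(f+g+|f-g|)$ and $f\wedge g=\tfrac12(f+g-|f-g|)$). There is nothing to add.
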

The next lemma is crucial for our future considerations.

\begin{Le}\label{funkcja fi}
Let $X$ be the Hausdorff topological space and let $A$ be both an $\fs$- and $\gd$- subset of $X$. Let also $g\cn X\setminus A\to A$
be a continuous map. The algebraic retraction $\varphi\cn X\to A\subset X$ given by the formula
\begin{equation}\label{fi}
\varphi(x)=\left\{\begin{array}{lll}
x & \textrm{:} & x\in A \\
g(x) & \textrm{:} & x\in X\setminus A,
\end{array} \right.
\end{equation}
belongs to the class $\PC(X,X)\cap\flb(X,X)$. Furthermore, if $X$ is perfectly normal then $\fii\in\fcb(X,X)$. Finally, if $X=\mR$ then
$\fii\in\B_1(X,X)$.
\end{Le}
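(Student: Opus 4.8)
The plan is to produce one single decomposition of $X$ that simultaneously witnesses $\varphi\in\PC(X,X)$ and yields $\varphi\in\flb(X,X)$, and then to read off the two remaining assertions from facts already recorded in the Introduction. Since $A$ is $\fs$, write $A=\bigcup_{n=0}^\infty F_n$ with each $F_n$ closed and $F_n\subset F_{n+1}$; since $A$ is $\gd$, the set $X\setminus A$ is $\fs$, so write $X\setminus A=\bigcup_{n=0}^\infty G_n$ with each $G_n$ closed and increasing. Put $X_n=F_n\cup G_n$. Then $(X_n)$ is an increasing sequence of closed sets with $\bigcup_n X_n=A\cup(X\setminus A)=X$, and after discarding the finitely many initial terms that may be empty we may assume each $X_n$ is nonempty (the degenerate cases $A=\emptyset$, which forces $X=\emptyset$ since $g$ maps into $A$, and $A=X$, where $\varphi=\id_X$, being trivial). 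The key point is that $F_n\subset A$ and $G_n\subset X\setminus A$ are disjoint and both closed in $X$, hence both relatively clopen in the subspace $X_n$; therefore $\varphi\restr X_n$ is continuous as soon as $\varphi\restr F_n$ and $\varphi\restr G_n$ are, and these equal $\id_{F_n}$ and $g\restr G_n$, which are continuous. Thus $\varphi\in\PC(X,X)$.

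For $\flb(X,X)$ I would invoke the general fact that $\PC(X,Y)\subset\flb(X,Y)$: for closed $C\subset Y$ one has $\varphi^{-1}(C)=\bigcup_n(\varphi\restr X_n)^{-1}(C)$, and each $(\varphi\restr X_n)^{-1}(C)$ is closed in $X_n$, hence closed in $X$, so $\varphi^{-1}(C)$ is $\fs$; by the remark after the definitions it suffices to check closed (equivalently, $\fs$) sets, so $\varphi\in\flb(X,X)$. If moreover $X$ is perfectly normal, then every open subset of $X$ is $\fs$ in $X$, whence $\flb(X,X)\subset\fcb(X,X)$ by the inclusion noted in the Introduction, and therefore $\varphi\in\fcb(X,X)$.

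Finally, let $X=\mR$. Using $\varphi\in\PC(\mR,\mR)$, extend each continuous restriction $\varphi\restr X_n$ by the Tietze theorem to a continuous $\varphi_n\cn\mR\to\mR$. Every $x\in\mR$ lies in $X_n$ for all sufficiently large $n$, and there $\varphi_n(x)=\varphi(x)$; hence $\varphi_n\to\varphi$ pointwise, so $\varphi\in\B_1(\mR,\mR)$. (Alternatively, since $\mR$ is metric, the Lebesgue--Hausdorff theorem turns $\varphi\in\fcb(\mR,\mR)$ directly into $\varphi\in\B_1(\mR,\mR)$.) I expect the only genuinely substantive point to be the clopen-partition argument of the first paragraph; the rest is bookkeeping with $\fs$ sets and appeals to the facts collected in the Introduction, so the main care is in choosing the right decomposition and disposing of the degenerate cases.
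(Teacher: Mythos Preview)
Your proof is correct and uses the same $X_n=F_n\cup G_n$ decomposition as the paper for the $\PC$ part; your clopen-partition justification for the continuity of $\varphi\restr X_n$ is in fact more explicit than the paper's statement that ``continuity of $i_{A\restr F_n}$ and $g_{\restr G_n}$ implies continuity of $\varphi$ on $H_n$''. There are two small differences worth recording. For $\flb$, the paper does not reuse the decomposition but computes directly $\varphi^{-1}(F)=(A\cap F)\cup g^{-1}(F)$ and argues each piece is $\fs$ (using that $A$ is $\fs$ and that $g^{-1}(F)$ is relatively closed in the $\fs$ set $X\setminus A$); your route recycles the $\PC$ structure to get the general inclusion $\PC\subset\flb$ in one line, which is tidier and more unified. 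For $X=\mR$, the paper simply invokes the Lebesgue--Hausdorff identity $\fcb(\mR,\mR)=\B_1(\mR,\mR)$, which is precisely your alternative; your primary Tietze-extension argument is more self-contained and yields an explicit approximating sequence, at the cost of a couple of extra lines.
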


\begin{proof}
We show first that $\fii\in\PC(X,X)$. Let $i_A\cn A\to A$ be an identity on $A$.
Since $A$ is both an $\fs$- and $\gd$- subset of $X$, there are increasing sequences $(F_n)$ and $(G_n)$ of closed sets such that
$\bigcup_{n}F_n=A$ and $\bigcup_{n}G_n=X\setminus A$. Obviously $F_j\cap G_k=\emptyset$ for every $j,k\in\mN$. Since $g$ is continuous,
the restrictions $i_{A_{\restr F_n}}$ and $g_{\restr G_n}$ are continuous for every $n\in\mN$. Set $H_n=F_n\cup G_n$ for every $n\in\mN$. Obviously
$\bigcup_n{H_n}=X$. Furthermore, the continuity of restrictions
$i_{A_{\restr F_n}}$ and $g_{\restr G_n}$ implies continuity of $\fii$ on every $H_n$. That completes this part of the proof.

To show that $\fii\in\flb(X,X)$ we assume $F$ to be a closed subset of $X$. We have $\fii^{-1}(F)=(A\cap F)\cup g^{-1}(F)$.
Since $A$ is $\fs$ and $F$ is closed, $A\cap F$ is $\fs$. Furthermore, $g^{-1}(F)$ is a relatively closed subset of $X\setminus A$
(which is $\fs$), hence it is again $\fs$. Thus $\fii^{-1}(F)$ is $\fs$.

If $X$ is perfectly normal then any open subset of $X$ is $\fs$. Hence, from the above proven facts, it follows that in this case
 the $\fii$-inverse image of any open set is $\fs$. Thus $\fii\in\fcb(X,X)$.

To see that $\fii\in\B_1(\mR,\mR)$ it is enough to use the well-known fact that $\fcb(\mR,\mR)=\B_1(\mR,\mR)$ (cf. the Lebesgue-Hausdorff theorem,
\cite[$\S$31, Section IX]{Kur}).
\end{proof}

\section{The main results}
Our first main result is about extending functions from the first Borel class.

\begin{Th}\label{rozsz_fcb}
Let $X$ be a perfectly normal topological space and let $A$ be its both an $\fs$- and $\gd$- subset. Moreover, let $f\in\fcb(A,\mR)$
and $g\cn X\setminus A\to A$ be a continuous map. There exists a non-negative, preserving unity and linear extension operator 
$\fii^\star\cn\fcb(A,\mR)\to\fcb(X,\mR)$ such that its restriction to
$\fcb^b(A,\mR)$ is an isometry.
\end{Th}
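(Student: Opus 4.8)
The plan is to take for $\fii$ the algebraic retraction supplied by Lemma~\ref{funkcja fi} applied to the given continuous map $g\cn X\setminus A\to A$, and to define the extension operator by precomposition,
\[
\fii^\star(f)=f\circ\fii\qquad\text{for }f\in\fcb(A,\mR).
\]
Since $\fii$ is a retraction onto $A$ (that is, $\fii\restr A=\id_A$, and consequently $\fii(X)=A$), we get at once $\fii^\star(f)\restr A=f$, so $\fii^\star(f)$ extends $f$; and $\fii^\star$ is linear because $(af+bh)\circ\fii=a(f\circ\fii)+b(h\circ\fii)$. Non-negativity of $\fii^\star$ and the identity $\fii^\star(\mj)=\mj$ are immediate, since $\fii$ takes all its values in $A$.

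The only step requiring genuine argument is that $\fii^\star$ really maps into $\fcb(X,\mR)$. Fix an open set $U\subset\mR$. Since $f\in\fcb(A,\mR)$, the preimage $f^{-1}(U)$ is $\fs$ in $A$; writing it as a countable union of sets $C\cap A$ with $C$ closed in $X$ and using that $A$ is itself $\fs$ in $X$, one sees that $f^{-1}(U)$ is in fact $\fs$ in $X$. Now I would invoke the membership $\fii\in\flb(X,X)$ from Lemma~\ref{funkcja fi}: the $\fii$-preimage of the $\fs$ set $f^{-1}(U)$ is again $\fs$ in $X$, and this preimage is precisely $(f\circ\fii)^{-1}(U)$. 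Hence $f\circ\fii\in\fcb(X,\mR)$. This is exactly the point where the stronger, first-level Borel class conclusion of Lemma~\ref{funkcja fi} is used rather than merely $\fii\in\fcb(X,X)$: a composition of two first-Borel-class maps need not be of the first Borel class.

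Finally, for the isometry claim, note that because $\fii(X)=A$ we have, for every $f\in\fcb^b(A,\mR)$,
\[
\|\fii^\star(f)\|_X=\sup_{x\in X}\bigl|f(\fii(x))\bigr|=\sup_{a\in A}|f(a)|=\|f\|_A,
\]
so $\fii^\star$ sends $\fcb^b(A,\mR)$ into $\fcb^b(X,\mR)$ and its restriction there is an isometry. I expect the class computation of the middle paragraph to be the only real obstacle; the algebraic properties (linearity, positivity, unity preservation, extension) are routine verifications about precomposition with a retraction, and the supremum-norm identity is immediate from surjectivity of $\fii$ onto $A$.
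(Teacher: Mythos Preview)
Your proof is correct and follows essentially the same approach as the paper: define $\fii^\star(f)=f\circ\fii$ with $\fii$ the retraction of Lemma~\ref{funkcja fi}, verify $\fii^\star(f)\in\fcb(X,\mR)$ by pulling back an open $U\subset\mR$ and using the first-level Borel property of $\fii$, and read off the algebraic and isometry properties from the fact that $\fii$ is a surjective retraction onto $A$. The only cosmetic difference is that the paper decomposes $f^{-1}(U)=A\cap\bigcup_n G_n$ with $G_n$ closed in $X$ and applies Lemma~\ref{funkcja fi} to each $G_n$ separately, whereas you observe directly that $f^{-1}(U)$ is $\fs$ in $X$ and invoke $\fii\in\flb(X,X)$ once; these are the same argument packaged differently.
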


\begin{proof}
Set $\fii^\star(f)=f\circ\fii$, where $\fii$ is an algebraic retraction given by (\ref{fi}).
Positivity of $\fii^{\star}$ is due to the fact that the class $\fcb(X,\mR)$ equipped with the order by the coordinates
is a linear lattice (see Corollary \ref{krata_liniowa}).
We shall show that $\fii^{\star}(f)\in\fcb(X,\mR)$ for every $f\in\fcb(A,\mR)$. Let $U$ be an open subset of $\mR$. There is
$$
\left(\fii^{\star}(f)\right)^{-1}(U)=(f\circ \fii)^{-1}(U)=\fii^{-1}\left(f^{-1}(U)\right).
$$
Since $f\in\fcb(A)$, $f^{-1}(U)$ is $\fs$ in $A$. Hence $f^{-1}(U)=\bigcup_{n}K_n$ with every $K_n$ closed in $A$. Thus
$$
f^{-1}(U)=\bigcup_{n}K_n=\bigcup_{n}(A\cap G_n)=A\cap\bigcup_{n}G_n
$$
with every $G_n$ closed in $X$. Therefore
$$
\left(\fii^{\star}(f)\right)^{-1}(U)=\fii^{-1}\left(A\cap\bigcup_{n}G_n\right)=\fii^{-1}(A)\cap\fii^{-1}\left(\bigcup_{n}G_n\right)=
$$
$$
=X\cap\bigcup_{n}\fii^{-1}(G_n)=\bigcup_{n}\fii^{-1}(G_n).
$$
According to Lemma \ref{funkcja fi}, every set $\fii^{-1}(G_n)$ is $\fs$ in $X$. Thence a set
$\left(\fii^{\star}(f)\right)^{-1}(U)=\bigcup_{n}\fii^{-1}(G_n)$ is $\fs$ in $X$ as well.
Thus $\fii^{\star}(f)\in\fcb(X)$.
$\fii^{\star}$ is of course an extension of $f$. Its linearity and preserving of the unity are obvious.
Moreover, if $f$ is a bounded element of $\fcb(A,\mR)$, we have
$$
\|\fii^{\star}(f)\|_X=\sup_{x\in X}|f\left(\fii(x)\right)|=\sup_{a\in A}|f(a)|=\|f\|_A,
$$
which means that $\fii^{\star}$ is an isometry.
\end{proof}

Note that in \cite[Theorem 2]{SiegWojtowicz} we have shown that if $X$ is normal and $A$ is both an $\fs$- and $\gd$- subset of $X$ then $f\in\PC(A,\mR)$ has
a linear extension $\fii^{\star}(f)=f\circ\fii$. In 2005 Kalenda and Spurn\'y showed \cite[Example 10]{KalendaSpurny} that there is a bounded Baire-one function
$f\cn\mQ\cap[0,1]\to\mR$ which cannot be extended to a Baire-one function on $[0,1]$. From these considerations and by Theorem \ref{Jay_Rog} we obtain the following

\begin{Uw}
Let $A$ be a nonempty subset of a metric space $X$. Let also $f\in\PC(A,\mR)$. If $\ol{f}\in\PC(A_1,\mR)$ extends $f$
then in Theorem \ref{Jay_Rog} we cannot require $A_1$ to be simultanoeusly $\fs$ and $\gd$.
\end{Uw}

\begin{proof}
Let $A=[0,1]\cap\mQ$ and $X=\mR$. $A$ is of course an $\fs$-subset of $X$. Moreover, $\PC(A,\mR)=\B_1(A,\mR)=\mR^A$. Let $f\cn A\to\mR$. Suppose that there is
an extension $\ol{f}\in\PC(A_1,\mR)$ such that $A_1\supset A$ is both an $\fs$- and $\gd$- subset of $\mR$ and $\ol{f}_{\restr A}=f$. Then, by
\cite[Theorem 2]{SiegWojtowicz}, the mapping $\fii^{\star}(\ol{f})=\ol{f}\circ\fii$ is a piecewise continuous (and thus Baire-one) extension of $\ol{f}$.
Those arguments would imply that every function $f\in\B_1(A,\mR)$ has an extension $\ol{f}\in\B_1(\mR,\mR)$, a contradiction.
\end{proof}
The second main theorem deals with extensions of Baire-one functions.

\begin{Th}\label{rozsz_B1}
Let $A$ be both an $\fs$- and $\gd$- subset of $\mR$ and let $f\in\B_1(A,\mR)$.
There exists a non-negative, preserving unity and linear extension operator $\fii^{\star}\cn\B_1(A,\mR)\to\B_1(\mR,\mR)$
such that its restriction to $\B_1^b(A,\mR)$ is an isometry.
\end{Th}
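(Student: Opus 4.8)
The plan is to obtain this as a consequence of Theorem~\ref{rozsz_fcb} (specialised to $X=\mR$) together with the Lebesgue--Hausdorff theorem, which on the line and all its subspaces identifies the Baire-one class with the first Borel class. First I would note that $\mR$ is perfectly normal and that, $A$ being a nonempty $\fs$- and $\gd$- subset of $\mR$, there certainly is a continuous map $g\cn\mR\setminus A\to A$ --- any constant map $g\equiv a_0$ with $a_0\in A$ will do. Thus the hypotheses of Theorem~\ref{rozsz_fcb} are satisfied with $X=\mR$, and that theorem furnishes the non-negative, unity-preserving, linear extension operator $\fii^{\star}\cn\fcb(A,\mR)\to\fcb(\mR,\mR)$, $\fii^{\star}(f)=f\circ\fii$, whose restriction to $\fcb^b(A,\mR)$ is an isometry for the supremum norm.

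Next I would invoke the Lebesgue--Hausdorff theorem (cf. \cite[$\S$31, Section IX]{Kur}): since $\mR$ and every subspace $A\subseteq\mR$ are metrizable, one has $\fcb(\mR,\mR)=\B_1(\mR,\mR)$ and $\fcb(A,\mR)=\B_1(A,\mR)$. Hence the domain and codomain of the operator produced above are already $\B_1(A,\mR)$ and $\B_1(\mR,\mR)$, so $\fii^{\star}$ is the required map: non-negativity, preservation of unity, linearity and the extension property $\fii^{\star}(f)\restr A=f$ transfer verbatim from Theorem~\ref{rozsz_fcb}. For the isometry statement one observes that boundedness is unaffected by which of the two coinciding classes a function is viewed in, so $\B_1^b(A,\mR)=\fcb^b(A,\mR)$ and the isometry of Theorem~\ref{rozsz_fcb} is exactly the one asserted here; alternatively, since $\fii$ maps $\mR$ onto $A$ and fixes $A$ pointwise, the short computation $\|\fii^{\star}(f)\|_{\mR}=\sup_{x\in\mR}|f(\fii(x))|=\sup_{a\in A}|f(a)|=\|f\|_A$ can simply be repeated.

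The only genuinely delicate point --- and the reason the proof is routed through $\fcb$ rather than attacked directly --- is the membership $f\circ\fii\in\B_1(\mR,\mR)$: this does \emph{not} follow formally from $f\in\B_1(A,\mR)$ and $\fii\in\B_1(\mR,\mR)$, since a composition of two Baire-one functions need not be Baire-one. What makes it work is that the retraction $\fii$ of Lemma~\ref{funkcja fi} lies in $\flb(\mR,\mR)$, so that the identity $(f\circ\fii)^{-1}(U)=\fii^{-1}\bigl(f^{-1}(U)\bigr)$ keeps us inside $\fcb(\mR,\mR)$ (preimages under a first-level Borel map of $\fs$ sets are $\fs$), and only at the end do we pass back to $\B_1$ via Lebesgue--Hausdorff. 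It remains only to dispose of the degenerate cases: $A=\emptyset$ is vacuous (no map $g$ exists), and if $A=\mR$ then $\fii=\id$ and the assertion is immediate.
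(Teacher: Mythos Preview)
Your argument is correct and takes a genuinely different route from the paper's own proof. The paper proceeds directly at the Baire-one level: it writes $f=\lim_n f_n$ with each $f_n$ continuous on $A$, writes $\fii=\lim_n\phi_n$ with each $\phi_n$ continuous, and then argues that $f_n\circ\phi_n\to f\circ\fii$ pointwise, relying on the piecewise-continuous structure of $\fii$ from Lemma~\ref{funkcja fi} to arrange that for each $x$ the values $\phi_n(x)$ are eventually equal to $\fii(x)$. You instead observe that Theorem~\ref{rozsz_fcb} (with $X=\mR$ and a constant $g$) already produces the operator on the $\fcb$ side, and then invoke the Lebesgue--Hausdorff identification $\fcb(A,\mR)=\B_1(A,\mR)$ and $\fcb(\mR,\mR)=\B_1(\mR,\mR)$ to transport all the stated properties.

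Your route is the cleaner one: it neatly sidesteps the composition-of-Baire-one obstacle you flag in your last paragraph, and it also avoids two delicate points that the paper's direct argument glosses over --- namely, that arbitrary continuous approximants $\phi_n$ to $\fii$ need not take values in $A$ (so $f_n\circ\phi_n$ is not automatically defined), and that the eventual-stationarity claim $\phi_n(x)=\fii(x)$ for $n>n_x$ is not a formal consequence of $\fii\in\B_1$ but must be engineered from the $\PC$-decomposition of $\fii$. The only extra ingredient you need is Lebesgue--Hausdorff on the metrizable subspace $A$, which is standard and costs nothing here.
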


\begin{proof}
Set $\fii^{\star}(f)=f\circ\fii$, where $\fii$ is an algebraic retraction (\ref{fi}).
Since $f\in\B_1(A,\mR)$, there is a sequence $\left(f_n\right)$ of continuous mappings on $A$ such that $f(a)=\lim_{n\to\infty}f_n(a)$, for every
$a\in A$. Since (by Lemma \ref{funkcja fi}) $\fii\in\B_1(A,\mR)$, we have $\fii=\lim_{n\to\infty}\phi_n$ with every $\phi_n$ continuous on $A$.
Obviously every composition $f_n\circ\phi_n$ is continuous on $A$. Moreover, by Lemma \ref{funkcja fi}, for every $x\in X$ there is $n_x\in\mN$ such that
$\phi_n(x)=\fii(x)$ for every $n>n_x$. Therefore

$$
\left(f_n\circ\phi_n\right)(x)=f_n\left(\phi_n(x)\right)=f_n\left(\fii(x)\right)\to f\left(\fii(x)\right)
$$
and $f_n\circ\phi_n\xrightarrow{n\to\infty}f\circ\fii=\fii^{\star}\in\B_1(\mR,\mR)$.
Linearity and preserving of the unity are obvious for $\fii^{\star}$.
Moreover, if $f$ is a bounded element of $\B_1(A,\mR)$, we have
$$
\|\fii^{\star}(f)\|_X=\sup_{x\in X}|f\left(\fii(x)\right)|=\sup_{a\in A}|f(a)|=\|f\|_A,
$$
which means that $\fii^{\star}$ is an isometry.
\end{proof}
To conclude the paper, let us give another method of extending functions of the first Borel class.
It has a weaker assumption about $X$ and is a generalization of Theorem \ref{rozsz_fcb}.

\begin{Th}\label{rozsz_fcb_lepsze}
Let $X$ be the Hausdorff topological space and let $A$ be both an $\fs$- and $\gd$- subset of $X$. Let also $f\in\fcb(A,\mR)$. There exists 
a linear extension operator $T\cn\fcb(A,\mR)\to\fcb(X,\mR)$ such that its restriction to $\fcb^b(A,\mR)$ is an isometry.
\end{Th}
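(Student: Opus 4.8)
The plan is to dispense with the retraction $\fii$ altogether and use the simplest possible linear extension, the \emph{zero extension}. Define $T\cn\fcb(A,\mR)\to\mR^X$ by
\[
(Tf)(x)=\left\{\begin{array}{lll}
f(x) & \textrm{:} & x\in A \\
0 & \textrm{:} & x\in X\setminus A.
\end{array} \right.
\]
It is immediate that $T$ is linear and that $(Tf)_{\restr A}=f$, so $T$ is a linear extension operator; the only real content is to check that $Tf\in\fcb(X,\mR)$ for every $f\in\fcb(A,\mR)$ and that the restriction of $T$ to $\fcb^b(A,\mR)$ is an isometry. (One could equally fix a base point $c\in A$, put $g\equiv c$ in Lemma \ref{funkcja fi}, and set $Tf=f\circ\fii$; the verification is analogous. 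The zero extension is preferable since it needs no base point.)

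For the membership, fix an open set $U\subset\mR$ and write $(Tf)^{-1}(U)=f^{-1}(U)\cup E_U$, where $E_U=X\setminus A$ if $0\in U$ and $E_U=\emptyset$ otherwise. I would argue in two steps. First, since $f\in\fcb(A,\mR)$ the set $f^{-1}(U)$ is $\fs$ in $A$, say $f^{-1}(U)=\bigcup_n(A\cap C_n)$ with each $C_n$ closed in $X$; because $A$ is $\fs$ in $X$, writing $A=\bigcup_m F_m$ with $F_m$ closed in $X$ gives $f^{-1}(U)=\bigcup_{n,m}(F_m\cap C_n)$, a countable union of sets closed in $X$, hence $\fs$ in $X$. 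Second, since $A$ is $\gd$ in $X$, its complement $X\setminus A$ is $\fs$ in $X$, so $E_U$ is $\fs$ in $X$ in either case. A union of two $\fs$ sets is $\fs$, whence $(Tf)^{-1}(U)$ is $\fs$ in $X$ and $Tf\in\fcb(X,\mR)$. This is precisely the place where both hypotheses on $A$ -- being $\fs$ \emph{and} $\gd$ -- are used, and where perfect normality of $X$ is no longer needed.

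It remains to record the routine points. Linearity of $T$ and the extension property are immediate from the pointwise formula, since on $X\setminus A$ every linear combination collapses to $0$. If $f\in\fcb^b(A,\mR)$ and $A\neq\emptyset$, then
\[
\|Tf\|_X=\sup_{x\in X}|(Tf)(x)|=\max\Big(\sup_{a\in A}|f(a)|,\,0\Big)=\|f\|_A ,
\]
because the constant value $0$ on $X\setminus A$ cannot exceed $\|f\|_A\wi0$ (the degenerate case $A=X$ being trivial); hence the restriction of $T$ to $\fcb^b(A,\mR)$ is an isometry. The only genuine obstacle is the first step of the membership argument -- promoting ``$\fs$ in $A$'' to ``$\fs$ in $X$'' -- together with the observation that the value $0$ forces $X\setminus A$ into the preimage, so that the $\gd$-assumption on $A$ is really needed. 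Note, finally, that $T$ is non-negative but, unlike $\fii^{\star}$ in Theorem \ref{rozsz_fcb}, it does \emph{not} preserve the unit (the constant $1$ on $A$ is sent to the characteristic function of $A$ regarded on $X$); this is the price paid for weakening the hypothesis on $X$, and it is why Theorem \ref{rozsz_fcb_lepsze} is offered as a different method rather than a formal strengthening of Theorem \ref{rozsz_fcb}.
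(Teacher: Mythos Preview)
Your proof is correct and follows essentially the same route as the paper: both extend $f$ by a constant value on $X\setminus A$ and then check that the preimage of an open $U$ splits as $f^{-1}(U)$ (an $\fs$-in-$A$ set, hence $\fs$ in $X$ because $A$ is $\fs$) together with either $\emptyset$ or the $\fs$ set $X\setminus A$, depending on whether the chosen constant lies in $U$. The only difference is the choice of constant: the paper fixes a base point $x_0\in A$ and uses $f(x_0)$, whereas you use $0$; you yourself mention the base-point variant as an alternative. The paper's choice has the slight advantage of making $T$ unit-preserving (the constant $1$ goes to the constant $1$), a property you correctly note is lost with the zero extension, though neither version of the theorem actually asserts it.
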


\begin{proof}
Fix any $x_0\in A$. For any $f\cn A\to\mR$ define $\ol{f}\cn X\to\mR$ by
$$
\ol{f}(x)=\left\{\begin{array}{ll}
f(x), & x\in A, \\
f(x_0), & x\in X\setminus A.
\end{array} \right.
$$
It is obvious that $T(f)=\ol{f}$ is linear and $\|f\|_A=\|\ol{f}\|_X$ whenever $f$ is bounded. Moreover, if $f\in\fcb(A,\mR)$ and $U\subset\mR$
is open then
$$
\ol{f}^{-1}(U)=\left\{\begin{array}{ll}
f^{-1}(U), & f(x_0)\notin U, \\
f^{-1}(U)\cup(X\setminus A), & f(x_0)\in U.
\end{array} \right.
$$
Since $f^{-1}(U)$ is a relatively $\fs$-subset of an $\fs$-set $A$, it is $\fs$ too. Moreover, since $X\setminus A$ is $\fs$, 
$\ol{f}^{-1}(U)$ is in both cases $\fs$. Hence $\ol{f}\in\fcb(X,\mR)$.
\end{proof}

\end{document}